\theoremstyle{plain}
\newtheorem*{theorem*}{Theorem}
\newtheorem{theorem}{Theorem}[section]
\newtheorem*{claim*}{Claim}
\newtheorem{conjecture}[theorem]{Conjecture}
\theoremstyle{remark}
\def\N{\mathbb{N}}
\def\Z{\mathbb{Z}}
\def\P{\mathbb{P}}
\def\B{\mathbf}
\let\originalleft\left
\let\originalright\right
\renewcommand{\left}{\mathopen{}\mathclose\bgroup\originalleft}
\renewcommand{\right}{\aftergroup\egroup\originalright}
\def\imod#1{\allowbreak\mkern10mu({\operator@font mod}\,\,#1)}
\begin{document}

\title{Connections in randomly oriented graphs}

\author{Bhargav Narayanan}
\address{Department of Pure Mathematics and Mathematical Statistics, University of Cambridge, Wilberforce Road, Cambridge CB3\thinspace0WB, UK}
\email{b.p.narayanan@dpmms.cam.ac.uk}

\date{6 December 2015}
\subjclass[2010]{Primary 60C05; Secondary 60K35}

\begin{abstract}
Given an undirected graph $G$, let us randomly orient $G$ by tossing independent (possibly biased) coins, one for each edge of $G$. Writing $a\rightarrow b$ for the event that there exists a directed path from a vertex $a$ to a vertex $b$ in such a random orientation, we prove that for any three vertices $s$, $a$ and $b$ of $G$, we have 
\[\P(s\rightarrow a \cap s\rightarrow b) \ge \P(s\rightarrow a) \P(s\rightarrow b).\]
\end{abstract}

\maketitle

\section {Introduction}
A very natural notion of a random directed graph is that of a random orientation of a fixed undirected graph. Random orientations of graphs often exhibit counter-intuitive properties. For example, Alm and Linusson~\citep{Tournament} showed that in a random orientation of any sufficiently large complete graph, the event that there is a directed path from $a$ to $s$ and the event that there is a directed path from $s$ to $b$ are positively correlated for any three distinct vertices $s$, $a$ and $b$; this is surprising since conditioning on the existence of a path from $a$ to $s$ would intuitively suggest that edges are typically `oriented towards $s$', and that it should consequently be harder to walk from $s$ to $b$. Random orientations in general, and the correlations between connection events in particular, have been studied by a number of authors; see, for instance, \citep{McD, GNP, FourVert}.

Given a finite undirected graph $G = (V,E)$ and a collection of probabilities $\B{p} = (p_e)_{e \in E}$, we orient the edges of $G$ independently by tossing a $p_e$-biased coin to decide the orientation of an edge $e \in E$. More formally, given $G = (V,E)$ and $\B{p}$ as above, suppose that $V \subset \N$ and define $\vec{G}(\B{p})$ to be a random orientation of $G$ where an edge $e = \{a,b\}   \in E$ with $a <b$ is oriented from $a$ to $b$ with probability $p_e$ and from $b$ to $a$ otherwise, independently of the other edges. We call $\vec G (\B{p})$ a \emph{$\B{p}$-biased orientation} of $G$ and write $\P_{G,\B{p}}$ for the corresponding probability measure. Note that $\vec{G}(\B{p})$ is an unbiased, uniformly random orientation of $G$ when $p_e = 1/2$ for every $e \in E$.

For a pair of vertices $a$ and $b$ of $G$, let $a \rightarrow b$ denote the \emph{connection event} that there is a directed path from $a$ to $b$ in a random orientation of $G$. Our aim in this short paper is to establish the following correlation inequality.
\begin{theorem}\label{corr-ineq}
Let $G=(V,E)$ be an undirected graph. For any three vertices $s,a,b \in V$ and any collection of probabilities $\B{p} = (p_e)_{e\in E}$, we have
\[\P_{G,\B{p}}(s\rightarrow a \cap s\rightarrow b) \ge \P_{G,\B{p}}(s\rightarrow a) \P_{G,\B{p}}(s\rightarrow b).\]
\end{theorem}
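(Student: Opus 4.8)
The plan is to prove, by induction, a slightly more general statement: allow $G$ to be a (multi)graph in which some edges have already been assigned an orientation and the remaining edges are oriented independently at random, and establish the inequality $\P(s\rightarrow a\cap s\rightarrow b)\ge\P(s\rightarrow a)\P(s\rightarrow b)$ in this broader setting. The induction will be on the pair $(\text{number of unoriented edges},\text{ total number of edges})$, ordered lexicographically. In the base case every edge is already oriented, so both connection events are deterministic; then $\P(s\rightarrow a)$ and $\P(s\rightarrow b)$ lie in $\{0,1\}$ and the inequality holds with equality.

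The key input for the inductive step is a monotonicity observation about edges at the source. If $e=\{s,u\}$ is incident to $s$, then orienting $e$ \emph{away} from $s$ (as $s\rightarrow u$) makes the event $s\rightarrow v$ at least as likely as orienting $e$ \emph{towards} $s$ (as $u\rightarrow s$), for every vertex $v$: coupling the orientations of all other edges, a simple directed path from $s$ to $v$ never uses the arc $u\rightarrow s$, so any such path present in the configuration with $e$ pointing towards $s$ is still present when $e$ points away from $s$. (Equivalently, flipping $e$ to point out of $s$ can only enlarge the out-component of $s$.)

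Now suppose some unoriented edge $e=\{s,u\}$ is incident to $s$, oriented away from $s$ with probability $p$. Conditioning on the orientation of $e$ gives two partially oriented graphs each with one fewer unoriented edge, to which the induction hypothesis applies. Writing $x_1,x_2$ (respectively $y_1,y_2$) for the conditional probabilities of $s\rightarrow a$ (respectively $s\rightarrow b$) when $e$ points away from, respectively towards, $s$, the observation above gives $x_1\ge x_2$ and $y_1\ge y_2$. Combining the induction hypothesis with the elementary identity
\[ p\,x_1y_1+(1-p)\,x_2y_2=\big(px_1+(1-p)x_2\big)\big(py_1+(1-p)y_2\big)+p(1-p)(x_1-x_2)(y_1-y_2) \]
yields $\P(s\rightarrow a\cap s\rightarrow b)\ge p\,x_1y_1+(1-p)\,x_2y_2\ge\P(s\rightarrow a)\P(s\rightarrow b)$, since the last term of the identity is nonnegative.

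It remains to treat the case where $s$ has no incident unoriented edge. If no arc leaves $s$, then $s\rightarrow v$ fails for every $v\neq s$, and in either situation (whether or not $a$ or $b$ equals $s$) the inequality is immediate because one of the two events is certain or impossible. Otherwise some arc $s\rightarrow u$ is present; if $a$ or $b$ lies in $\{s,u\}$ the corresponding event is certain and the inequality is trivial, and if not, we contract the arc $s\rightarrow u$, obtaining a partially oriented multigraph (parallel edges or loops thus created are harmless) with the same number of unoriented edges but strictly fewer edges, in which reachability from the merged vertex agrees with reachability from $s$ in the original graph; the induction hypothesis then applies. The main point to get right is the choice of induction: conditioning on an \emph{arbitrary} edge fails, because reorienting it may make $s\rightarrow a$ more likely while making $s\rightarrow b$ less likely (so that $(x_1-x_2)(y_1-y_2)$ can be negative), and it is precisely the edge being at the source $s$ that forces the two connection events to respond in the same direction.
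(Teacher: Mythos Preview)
Your proof is correct and takes a genuinely different route from the paper's.

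The paper proceeds by induction on the number of \emph{vertices}: it first generalizes the statement to a source \emph{set} $S$, deletes $S$, conditions on the random set $O_S$ of neighbours receiving an arc out of $S$, and then invokes the Ahlswede--Daykin four-functions theorem to handle the resulting sums over subsets of the neighbourhood. The four-functions hypothesis is verified using the inductive assumption on the smaller graph together with the (trivial) log-supermodularity of $X\mapsto\P(O_S=X)$.

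Your argument instead inducts on \emph{edges}. The key observation---that for an edge incident to $s$ both events $s\to a$ and $s\to b$ are monotone in the \emph{same} direction under flipping that edge---lets you replace the four-functions theorem by the one-variable covariance identity
\[
p\,x_1y_1+(1-p)\,x_2y_2-(px_1+(1-p)x_2)(py_1+(1-p)y_2)=p(1-p)(x_1-x_2)(y_1-y_2)\ge 0.
\]
When no random edge touches $s$ you contract a deterministic arc $s\to u$; this is sound because the presence of that arc forces the out-component of $s$ to coincide with that of $\{s,u\}$, which is precisely what the merged vertex sees in the contracted multigraph. (In effect the contraction plays the same role as the paper's passage from a single source $s$ to a source set.) Your explicit warning that conditioning on an arbitrary edge fails is exactly the obstruction the paper alludes to when it says connection events are not ``up-closed''.

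What your approach buys is exactly what the paper asks for in its concluding section: an elementary proof that avoids the four-functions theorem entirely. What the paper's approach buys is a cleaner structural generalisation (the set-source version) and a direct link to the Ahlswede--Daykin framework, which may adapt more readily to related correlation questions.
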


The motivation for considering biased orientations in Theorem~\ref{corr-ineq} comes from our lack of understanding of biased orientations of a number of natural graphs, most important of which is perhaps the square lattice. For $0 \le p \le 1$, let $\vec \Z^2(p)$ denote a random orientation of the square lattice obtained as follows: orient a horizontal edge, independently of the other edges, rightwards with probability $p$ and otherwise leftwards, and similarly, orient a vertical edge, independently of the other edges, upwards with probability $p$ and otherwise downwards. The following conjecture is due to Grimmett~\citep{Grim} and remains wide open.

\begin{conjecture} 
For each $p \neq 1/2$, $\vec \Z^2(p)$ almost surely contains an infinite directed path.
\end{conjecture}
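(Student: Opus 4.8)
The plan is to reduce the conjecture to a positivity statement about the out-component of a single vertex, and then to attack that statement by combining oriented percolation with a block renormalisation. I should say at the outset that I do not expect to push the argument through in the regime where the bias is weak; the whole difficulty of the conjecture is concentrated there, and I will point out exactly where.

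First I would remove the case $p < 1/2$ by the reflection symmetry $(x,y) \mapsto (-x,-y)$, which simultaneously reverses every edge and interchanges the roles of $p$ and $1-p$ while preserving directed paths, so it suffices to treat $p > 1/2$. Write $\theta(p)$ for the probability that the out-component of the origin (the set of vertices reachable from the origin along a directed path) is infinite in $\vec\Z^2(p)$. Since $\vec\Z^2(p)$ is locally finite, an infinite out-component carries an infinite directed path out of the origin by K\"onig's lemma, so $\{\text{origin has infinite out-component}\} \subseteq \{\vec\Z^2(p)\text{ contains an infinite directed path}\}$. The latter event is invariant under the translations of $\Z^2$, which act ergodically on the product measure governing the orientation, so it has probability $0$ or $1$; hence it suffices to prove $\theta(p) > 0$ for every $p > 1/2$.

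For strong bias this is immediate. Let $\vec p_c \approx 0.6446$ denote the critical probability for oriented percolation on $\Z^2$ with edges directed north and east. If $p > \vec p_c$, then discarding every edge not oriented northwards or eastwards leaves precisely supercritical oriented percolation (each forward edge survives independently with probability $p$), whose infinite cluster forces $\theta(p) > 0$. The real work is the range $1/2 < p \le \vec p_c$, where the forward edges alone do not percolate and any path to infinity must exploit backward (south/west) edges to route around local obstructions. Here I would tile $\Z^2$ into $N \times N$ boxes, declare a box \emph{good} when it contains a directed crossing (together with a matching crossing in a small overlap region) so that crossings of adjacent good boxes concatenate into one directed path, and then try to show that the good-box process stochastically dominates a supercritical oriented percolation; an infinite directed chain of good boxes would then yield an infinite directed path.

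The main obstacle is supplying the input to this renormalisation, namely that the probability a box is good can be driven close to $1$ by taking $N$ large, uniformly for $p$ bounded away from $1/2$. This is the heart of the conjecture, and it is where I expect to be stuck. The difficulty is twofold. Because the bias is weak, the crossing path must make genuine use of backward edges, so blocking a crossing corresponds to a directed analogue of a dual barrier, and I see no a priori reason why such barriers should fail to percolate on the coarse scale. Worse, the one probabilistic tool one would naturally reach for, the positive correlation of connection events from Theorem~\ref{corr-ineq}, pushes the wrong way: letting $X$ count the reachable vertices in a box, the inequality $\P_{G,\B{p}}(s\rightarrow u \cap s\rightarrow v) \ge \P_{G,\B{p}}(s\rightarrow u)\P_{G,\B{p}}(s\rightarrow v)$ gives $\E[X^2] \ge (\E X)^2$, which is useless for a second-moment lower bound on $\P(X > 0)$. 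Thus the clustering of connection events, far from helping, is exactly what blocks the elementary routes to $\theta(p) > 0$; closing the gap seems to demand a genuinely new ingredient, either quantitative control of backward detours around local obstructions, or a monotonicity of $\theta$ in $p$ that the obvious edge-flipping couplings fail to deliver.
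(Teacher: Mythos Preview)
The statement you are attempting is not proved in the paper: it is explicitly labelled a \emph{conjecture}, attributed to Grimmett, and described as ``wide open''. There is therefore no proof in the paper to compare your proposal against.

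Your proposal is not a proof either, and to your credit you say so yourself. The reductions you give are sound: the reflection $(x,y)\mapsto(-x,-y)$ does interchange $p$ and $1-p$ while preserving the existence of an infinite directed path; K\"onig's lemma together with the ergodicity of the product measure under translations does reduce the question to $\theta(p)>0$; and for $p$ strictly above the oriented bond percolation threshold $\vec p_c$, the monotone coupling with supercritical oriented percolation does give $\theta(p)>0$. But for $1/2<p\le\vec p_c$ you sketch a block renormalisation and then correctly identify that you cannot supply its essential input, namely that a box is good with probability close to $1$ once $N$ is large. That is exactly the obstruction that keeps the conjecture open, and nothing in the paper --- including Theorem~\ref{corr-ineq} --- closes it. Your remark that the positive-correlation inequality points the wrong way for a second-moment argument is accurate but does not advance the problem; the gap in your argument is the gap in the literature.
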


Let us mention that while we state and prove Theorem~\ref{corr-ineq} for finite graphs, the result also holds for any graph on a countably
infinite vertex set (such as the square lattice); indeed, this follows from a standard limiting argument. We also remark that the challenge in establishing Theorem~\ref{corr-ineq} arises entirely from having to deal with genuinely biased orientations. 

Indeed, the main difficulty in working with random orientations is that a connection event $a \rightarrow b$ is not `up-closed' in general. In other words, it is not necessarily true that one can find a `good' orientation for each edge with the property that the event $a \rightarrow b$ is closed under the operation of changing the orientation of an edge from `bad' to `good'. For example, it is clear from Figure~\ref{bad} that connection events in a (large) finite grid are neither closed under the operation of changing the orientation of a horizontal edge to the left, nor closed under the operation of changing the orientation of a horizontal edge to the right.

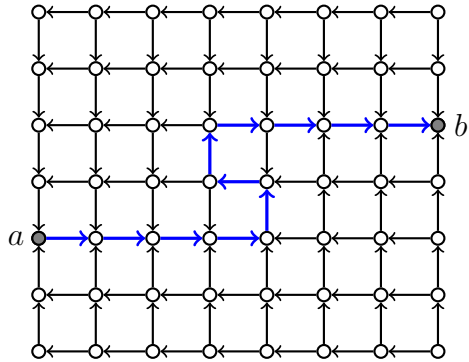
\begin{figure}
	\begin{center}
		\begin{tikzpicture}[scale = 0.75]
		\foreach \x in {1,2,3,4,5,6,7,8}
		\foreach \y in {1,2,3,4,5,6,7}
		\node (l\x\y) at (\x, \y) [inner sep=0.6mm, thick, circle, draw=black!100, fill=black!0] {};
		
		\node at (1, 3) [inner sep=0.6mm, thick, circle, draw=black!100, fill=black!50] {};
		\node at (8, 5) [inner sep=0.6mm, thick, circle, draw=black!100, fill=black!50] {};
		\node at (0.6,3) {$a$};
		\node at (8.4,5) {$b$};
		
		\foreach \x in {1,2,3,4,5,6,7,8}
		\foreach \y[evaluate={\yp=int(\y+1)}] in {5,6}
		\draw[->, thick] (l\x\yp) -- (l\x\y);
		
		\foreach \x in {1,2,3}
		\foreach \y[evaluate={\yp=int(\y+1)}] in {3,4}
		\draw[->, thick] (l\x\yp) -- (l\x\y);
		
		\foreach \x in {1,2,3,4,5,6,7,8}
		\foreach \y[evaluate={\yp=int(\y+1)}] in {1,2}
		\draw[->, thick] (l\x\y) -- (l\x\yp);

		\foreach \x in {6,7,8}
		\foreach \y[evaluate={\yp=int(\y+1)}] in {3,4}
		\draw[->, thick] (l\x\y) -- (l\x\yp);
		
		\foreach \x[evaluate={\xp=int(\x+1)}] in {1,2,3,4,5,6,7}
		\foreach \y in {1,2,6,7}
		\draw[->, thick] (l\xp\y) -- (l\x\y);
		
		\foreach \x[evaluate={\xp=int(\x+1)}] in {1,2,3,5,6,7}
		\foreach \y in {4}
		\draw[->, thick] (l\xp\y) -- (l\x\y);
		
		\foreach \x[evaluate={\xp=int(\x+1)}] in {1,2,3,4}
		\foreach \y in {3}
		\draw[->, very thick, blue] (l\x\y) -- (l\xp\y);

		\foreach \x[evaluate={\xp=int(\x+1)}] in {5,6,7}
		\foreach \y in {3}
		\draw[->, thick] (l\xp\y) -- (l\x\y);
		
		\foreach \x[evaluate={\xp=int(\x+1)}] in {4,5,6,7}
		\foreach \y in {5}
		\draw[->, very thick, blue] (l\x\y) -- (l\xp\y);
		
		\foreach \x[evaluate={\xp=int(\x+1)}] in {1,2,3}
		\foreach \y in {5}
		\draw[->, thick] (l\xp\y) -- (l\x\y);
		
		\draw[->, very thick, blue] (l54) -- (l44);
		\draw[->, very thick, blue] (l53) -- (l54);
		\draw[->, very thick, blue] (l44) -- (l45);
		\draw[->,, thick] (l55) -- (l54);
		\draw[->,, thick] (l44) -- (l43);
		\end{tikzpicture}
	\end{center}
	\caption{Left-to-right connection events in the grid are not `up-closed'.}
	\label{bad}
\end{figure}

This `up-closedness' issue however disappears when we restrict ourselves to unbiased orientations. Indeed, in this case, as was observed by McDiarmid~\citep{McD}, the distribution of the set of vertices reachable from a vertex $s$ in an unbiased orientation of $G$ is identical to the distribution of the connected component of $s$ in the standard percolation model (at density 1/2) on $G$. Therefore, as noted by Linusson~\citep{Linusson}, our result follows instantly from Harris's lemma~\citep{Harris} in this case. However, we see no simple way of deducing Theorem~\ref{corr-ineq} from Harris's lemma in general; instead, our proof relies on the powerful four-functions theorem of Ahlswede and Daykin~\citep{fourfunc}.

The proof of Theorem~\ref{corr-ineq} is given in Section~\ref{proof-sec}. We make a few remarks and conclude this note in Section~\ref{conc}.

\section{Proof of the main result}\label{proof-sec}
To prove Theorem~\ref{corr-ineq}, we shall require the four-functions theorem of Ahlswede and Daykin~\citep{fourfunc}; see~\citep{textbook} for a proof and several related results.

\begin{theorem}\label{4-func}
Let $S$ be a finite set and let $\alpha, \beta, \gamma$ and $\delta$ be functions from the set of all subsets of $S$ to the non-negative reals. If we have 
\[
\alpha(X_1)\beta(X_2) \le \gamma(X_1 \cup X_2)\delta(X_1 \cap X_2)
\]
for any two subsets $X_1, X_2 \subset S$, then 
\[
\sum_{X \subset S}\alpha(X) \sum_{X \subset S}\beta(X) \le \sum_{X \subset S}\gamma(X)\sum_{X \subset S}\delta(X).
\]
\end{theorem}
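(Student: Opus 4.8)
The plan is to prove Theorem~\ref{4-func} by induction on $|S|$, with the inductive step driven by a \emph{contraction} that collapses a single element of the ground set. The whole argument turns on one elementary inequality about functions on a one-point ground set, and this same inequality serves both as the base case and as the engine that powers the inductive step; so essentially all of the genuine content is concentrated there.

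I would first isolate the one-point case. Take $S = \{x\}$ and abbreviate $a_0 = \alpha(\emptyset)$, $a_1 = \alpha(\{x\})$, with analogous notation $b_i, c_i, d_i$ for $\beta, \gamma, \delta$. The four choices of $(X_1, X_2)$ in the hypothesis collapse to the four inequalities $a_0 b_0 \le c_0 d_0$, $a_1 b_1 \le c_1 d_1$ (the ``diagonal'' pair) together with $a_1 b_0 \le c_1 d_0$ and $a_0 b_1 \le c_1 d_0$ (the ``mixed'' pair), and the goal is $(a_0 + a_1)(b_0 + b_1) \le (c_0 + c_1)(d_0 + d_1)$. Since the product on the right expands as $c_0 d_0 + c_1 d_1 + (c_0 d_1 + c_1 d_0)$, adding the two diagonal hypotheses reduces everything to the cross-term bound $a_0 b_1 + a_1 b_0 \le c_0 d_1 + c_1 d_0$. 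Writing $u = a_0 b_1$, $v = a_1 b_0$, $P = c_1 d_0$ and $Q = c_0 d_1$, the mixed pair gives $u \le P$ and $v \le P$, while multiplying the two diagonal hypotheses gives $uv = (a_0 b_0)(a_1 b_1) \le (c_0 d_0)(c_1 d_1) = PQ$. Thus the base case reduces to the purely numerical lemma that nonnegative reals with $u, v \le P$ and $uv \le PQ$ satisfy $u + v \le P + Q$; I would prove this by splitting on whether $v \le Q$, the point being that it holds for all nonnegative values and so requires no separate treatment of degenerate cases.

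For the inductive step, fix $x \in S$, put $S' = S \setminus \{x\}$, and to each $f \in \{\alpha, \beta, \gamma, \delta\}$ associate the marginalised function $f'$ on subsets of $S'$ by $f'(Y) = f(Y) + f(Y \cup \{x\})$. This contraction telescopes the sums, in the sense that $\sum_{Y \subset S'} f'(Y) = \sum_{X \subset S} f(X)$, so the conclusion for $S$ is literally the conclusion for $S'$ applied to the primed functions. The heart of the step is therefore to check that $\alpha', \beta', \gamma', \delta'$ inherit the four-functions hypothesis on $S'$, i.e.\ that $\alpha'(Y_1)\beta'(Y_2) \le \gamma'(Y_1 \cup Y_2)\delta'(Y_1 \cap Y_2)$ for all $Y_1, Y_2 \subset S'$. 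Expanding each primed value into its two summands, this is exactly the one-point lemma applied to the four two-valued functions obtained by reading off $\alpha, \beta, \gamma, \delta$ at the pairs $\{Y_1, Y_1 \cup \{x\}\}$, $\{Y_2, Y_2 \cup \{x\}\}$, $\{(Y_1 \cup Y_2), (Y_1 \cup Y_2) \cup \{x\}\}$ and $\{(Y_1 \cap Y_2), (Y_1 \cap Y_2) \cup \{x\}\}$. Using $x \notin Y_1, Y_2$, the four hypotheses demanded by that lemma translate back into four instances of the original hypothesis on $S$, with $X_1, X_2$ ranging over $Y_1$ or $Y_1 \cup \{x\}$ and $Y_2$ or $Y_2 \cup \{x\}$; these all hold, the lemma applies, and the induction hypothesis on $S'$ then finishes the proof.

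The main obstacle is the one-point case, and within it the single numerical inequality ``$u, v \le P$ and $uv \le PQ$ imply $u + v \le P + Q$''. Everything surrounding it is routine once the contraction $f'(Y) = f(Y) + f(Y \cup \{x\})$ is in place: the telescoping of the marginalised sums is immediate, and the matching of each original hypothesis to the corresponding hypothesis of the one-point lemma is a short verification using only that $x$ lies outside $Y_1$ and $Y_2$.
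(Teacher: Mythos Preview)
Your argument is correct and is essentially the standard Ahlswede--Daykin induction: reduce to the one-point case via the marginalisation $f'(Y)=f(Y)+f(Y\cup\{x\})$, and settle the one-point case with the elementary inequality ``$u,v\le P$ and $uv\le PQ$ imply $u+v\le P+Q$''. All the verifications you sketch go through as stated.

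There is nothing to compare against in the paper itself, however: the paper does not prove Theorem~\ref{4-func}. It quotes the four-functions theorem as a black box, attributing it to Ahlswede and Daykin and pointing to a textbook for a proof, and then applies it in the proof of Theorem~\ref{corr-set}. So your write-up supplies a proof where the paper deliberately omits one; the approach you give is the classical one and would be an appropriate substitute for the external reference.
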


Before we proceed further, let us introduce some additional notation. For a set of vertices $A$ and a vertex $b$, we write $A \rightarrow b$ for the union of all the events $a \rightarrow b$ with $a \in A$. Theorem~\ref{corr-ineq} is a special case of the following result.

\begin{theorem}\label{corr-set}
Let $G=(V,E)$ be an undirected graph. For any nonempty set $S\subset V$, any pair of vertices $a, b \in V$ and any collection of probabilities $\B{p} = (p_e)_{e\in E}$, we have
\[\P_{G,\B{p}}(S\rightarrow a \cap S\rightarrow b) \ge \P_{G,\B{p}}(S\rightarrow a) \P_{G,\B{p}}(S\rightarrow b).\]
\end{theorem}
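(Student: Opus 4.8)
The plan is to prove Theorem~\ref{corr-set} directly; Theorem~\ref{corr-ineq} is the case $S=\{s\}$. Fix $G$ and $\B{p}$, choose a reference orientation for every edge, and identify an orientation of $G$ with a point of $\Omega=\{0,1\}^E$ by recording which edges agree with their reference. Let $\mu$ be the resulting product measure on $\Omega$ and recall that $\mu$ is \emph{log-modular}, $\mu(\omega)\mu(\omega')=\mu(\omega\vee\omega')\,\mu(\omega\wedge\omega')$ for all $\omega,\omega'$. Since $\P_{G,\B{p}}(S\to a\cap S\to b)\cdot\P_{G,\B{p}}(\Omega)\ge\P_{G,\B{p}}(S\to a)\,\P_{G,\B{p}}(S\to b)$ is exactly the assertion to be proved, this has precisely the shape produced by the four functions theorem (Theorem~\ref{4-func}): it is enough to write down four non-negative functions on a distributive lattice with the correct totals and to verify the pointwise hypothesis.

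The first thing to try is Theorem~\ref{4-func} on the lattice $\Omega$ with $\alpha=\mu\mathbf{1}_{\{S\to a\}}$, $\beta=\mu\mathbf{1}_{\{S\to b\}}$, $\gamma=\mu\mathbf{1}_{\{S\to a\}\cap\{S\to b\}}$ and $\delta=\mu$; the totals are then $\P_{G,\B{p}}(S\to a)$, $\P_{G,\B{p}}(S\to b)$, $\P_{G,\B{p}}(S\to a\cap S\to b)$ and $1$, so the conclusion is the desired inequality. By log-modularity the pointwise hypothesis collapses to the requirement that $\omega_1\vee\omega_2$ realise \emph{both} connections whenever $\omega_1$ realises $S\to a$ and $\omega_2$ realises $S\to b$ --- a stability property that connection events simply do not enjoy, for any choice of reference orientation, as the discussion around Figure~\ref{bad} makes plain. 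So the four functions theorem must be applied to a different object than the orientation itself.

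I would apply it to the reachable set. Let $R=R(\omega)\subseteq V$ be the (random) set of vertices reachable from $S$, so that $\{S\to v\}=\{v\in R\}$. For $S\subseteq R'\subseteq V$ the event $\{R(\omega)=R'\}$ imposes, on the edges spanned by $R'$, that $S$ reaches every vertex of $R'$ within $G[R']$, and, on the edge-boundary $\partial R'$, that every boundary edge points into $R'$; these are conditions on disjoint sets of edges, so
\[\P_{G,\B{p}}(R(\omega)=R')=\mathrm{sp}(R')\cdot\mathrm{bd}(R'),\qquad\text{where}\quad\mathrm{bd}(R')=\prod_{e\in\partial R'}\P(e\text{ points into }R')\]
and $\mathrm{sp}(R')=\P_{G[R'],\B{p}}(S\text{ reaches every vertex of }R')$. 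The natural move is to apply Theorem~\ref{4-func} on the Boolean lattice $\{R':S\subseteq R'\subseteq V\}\cong 2^{V\setminus S}$ with $\alpha(R')=\mathrm{sp}(R')\mathrm{bd}(R')\mathbf{1}[a\in R']$, $\beta(R')$ the analogue with $b$, $\gamma(R')=\mathrm{sp}(R')\mathrm{bd}(R')\mathbf{1}[a,b\in R']$ and $\delta=\mathrm{sp}\cdot\mathrm{bd}$; the totals are again $\P_{G,\B{p}}(S\to a)$, $\P_{G,\B{p}}(S\to b)$, $\P_{G,\B{p}}(S\to a\cap S\to b)$ and $1$. The pointwise hypothesis holds trivially unless $a\in R_1$ and $b\in R_2$, and then, since $a\in R_1\subseteq R_1\cup R_2$ and $b\in R_2\subseteq R_1\cup R_2$, it amounts to
\[\mathrm{sp}(R_1)\mathrm{bd}(R_1)\,\mathrm{sp}(R_2)\mathrm{bd}(R_2)\ \le\ \mathrm{sp}(R_1\cup R_2)\mathrm{bd}(R_1\cup R_2)\,\mathrm{sp}(R_1\cap R_2)\mathrm{bd}(R_1\cap R_2).\]
The boundary factor causes no trouble: writing $\log\mathrm{bd}(R')$ as a sum over edges, the contribution of an edge $\{i,j\}$ depends only on the membership of $i$ and $j$ in $R'$, vanishes unless exactly one of them lies in $R'$, and otherwise equals $\log p_e$ or $\log(1-p_e)$; an edge-by-edge check --- the one non-vacuous instance being $p_e(1-p_e)\le 1$ --- shows this contribution is a supermodular function of $R'$, hence $\log\mathrm{bd}$ is supermodular and $\mathrm{bd}(R_1)\mathrm{bd}(R_2)\le\mathrm{bd}(R_1\cup R_2)\mathrm{bd}(R_1\cap R_2)$.

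The crux --- and the step I expect to be the real obstacle --- is the analogous inequality $\mathrm{sp}(R_1)\mathrm{sp}(R_2)\le\mathrm{sp}(R_1\cup R_2)\mathrm{sp}(R_1\cap R_2)$ for the spanning factor. This is \emph{false} in general: on a four-cycle $s-u-v-w-s$ with $R_1=\{s,u,v\}$ and $R_2=\{s,v,w\}$ the right-hand side vanishes because $G[\{s,v\}]$ has no edges, while the left-hand side is positive. So $\gamma$ and $\delta$ cannot be honest restrictions of the law of $R(\omega)$, and a redistribution of mass is forced. The lever I would use is that $\mathrm{sp}$ is monotone under enlarging the graph: for any orientation, a directed path inside $G[R_1]$ remains a directed path in $G[R_1\cup R_2]$, so $\P_{G[R_1\cup R_2],\B{p}}(S\text{ reaches all of }R_1)\ge\mathrm{sp}(R_1)$, and likewise for $R_2$. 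This turns the problem into bounding $\mathrm{sp}(R_1)\mathrm{sp}(R_2)$ by $\mathrm{sp}(R_1\cup R_2)$ times a correction supported on $R_1\cap R_2$ --- that is, into a correlation inequality for the events ``$S$ reaches all of $R_1$'' and ``$S$ reaches all of $R_2$'' inside $G[R_1\cup R_2]$, which has no more vertices outside $S$ than $G$ does --- so one can hope to close the argument by an induction on $|V\setminus S|$, feeding the correction into a suitably chosen $\gamma$ and $\delta$. Pinning down that choice of $\gamma$ and $\delta$, so that the four totals remain $\P_{G,\B{p}}(S\to a)$, $\P_{G,\B{p}}(S\to b)$, $\P_{G,\B{p}}(S\to a\cap S\to b)$ and $1$ while the pointwise hypothesis of Theorem~\ref{4-func} survives, is where the genuine work of the proof lies.
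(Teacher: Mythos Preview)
Your write-up is an honest exploration, but it is not a proof: you end by saying that ``pinning down that choice of $\gamma$ and $\delta$ \ldots\ is where the genuine work of the proof lies'' and then stop. The suggested induction on $|V\setminus S|$ via $G[R_1\cup R_2]$ does not obviously terminate (nothing prevents $R_1\cup R_2=V$), and the subproblem you reduce to --- positive correlation of the events ``$S$ reaches all of $R_1$'' and ``$S$ reaches all of $R_2$'' --- is an intersection of many connection events, strictly more than what the theorem provides inductively. So as it stands there is a genuine gap.

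The idea you are missing is to condition not on the full reachable set $R(\omega)$ but only on the \emph{one-step} out-neighbourhood of $S$. Let $H=G-S$, let $T$ be the set of vertices of $H$ adjacent to $S$, and let $O_S\subseteq T$ be the set of $v\in T$ receiving at least one edge from $S$. Two things make this the right variable: first, the events $\{v\in O_S\}$ are independent across $v\in T$ (each is determined by the edges from $S$ to $v$ alone), so $\P(O_S=X)$ is \emph{exactly} log-modular on $2^{T}$ --- the analogue of your $\mathrm{sp}\cdot\mathrm{bd}$ factorisation collapses to a product measure; second, conditional on $O_S=X$ the event $S\to a$ in $G$ is precisely $X\to a$ in $H$, and $H$ has strictly fewer vertices, so induction on $|V|$ applies cleanly. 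With $\alpha(X)=\P(O_S=X)\widehat\P(X\to a)$, $\beta(X)=\P(O_S=X)\widehat\P(X\to b)$, $\gamma(X)=\P(O_S=X)\widehat\P(X\to a\cap X\to b)$ and $\delta(X)=\P(O_S=X)$, the pointwise hypothesis of Theorem~\ref{4-func} reduces to $\widehat\P((X_1\cup X_2)\to a\cap(X_1\cup X_2)\to b)\ge\widehat\P(X_1\to a)\widehat\P(X_2\to b)$, which follows from the inductive hypothesis on $H$ together with the monotonicity $\widehat\P((X_1\cup X_2)\to c)\ge\widehat\P(X_i\to c)$. Your instinct to change the lattice from $\{0,1\}^E$ to subsets of vertices was right; the fix is to strip off only one layer at a time.
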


\begin{proof}
We prove the theorem by induction on the number of vertices. Clearly, the result holds trivially when $G$ has only one vertex. Therefore, suppose that $G$ has more than one vertex and that we have proved the result for all graphs with fewer vertices than $G$. The inequality is also trivial if either $a \in S$ or $b \in S$, so suppose that neither $a$ nor $b$ belongs to $S$.

Let $H$ denote the graph obtained by deleting $S$ from $G$. Let $T$ denote the set of those vertices of $H$ that are adjacent to some vertex of $S$ in $G$. We write $O_S \subset T$ for the (random) set of those vertices $v \in T$ for which there exists an edge oriented from $S$ to $v$ in $\vec{G}(\B{p})$.

In what follows, to reduce clutter, we write $\P$ for the measure $\P_{G,\B{p}}$ and ${\widehat \P}$ for the measure induced by $\P$ on the graph $H$. For a subset $X \subset T$, let us define
\begin{align*}
\alpha(X) &= \P(O_S = X) {\widehat \P} (X \rightarrow a), \\
\beta(X) &= \P(O_S = X) {\widehat \P} (X \rightarrow b), \\
\gamma(X) &= \P(O_S = X) {\widehat \P} (X \rightarrow a \cap X \rightarrow b), \text{ and}\\
\delta(X) &= \P(O_S = X).
\end{align*}
Note that 
\begin{align*}
\P (S \rightarrow a) = \sum_{X \subset T} \P(O_S = X) \P (S \rightarrow a \,|\, O_S = X) = \sum_{X \subset T} \P(O_S = X) {\widehat \P} (X \rightarrow a),
\end{align*}
so we have
\begin{align*}
\sum_{X \subset T} \alpha(X) &= \P (S \rightarrow a), \\
\sum_{X \subset T} \beta(X) &= \P (S \rightarrow b), \\
\sum_{X \subset T} \gamma(X) &= \P (S \rightarrow a \cap S \rightarrow b), \text{ and}\\
\sum_{X \subset T} \delta(X) &= 1.
\end{align*}

Therefore, by Theorem~\ref{4-func}, to prove our result, it suffices to show that
\[\alpha(X_1)\beta(X_2) \le \gamma(X_1 \cup X_2)\delta(X_1 \cap X_2)
\]
for any two subsets $X_1, X_2 \subset T$. We may inductively assume that we have established Theorem~\ref{corr-set} for $H$. Hence, it follows that
\begin{align*} 
{\widehat \P} (((X_1 \cup X_2) \rightarrow a) \cap ((X_1 \cup X_2) \rightarrow b)) &\ge {\widehat \P} ((X_1 \cup X_2) \rightarrow a) {\widehat \P}( (X_1 \cup X_2) \rightarrow b)\\
&\ge {\widehat \P} (X_1  \rightarrow a) {\widehat \P}( X_2 \rightarrow b).
\end{align*}
Therefore, it suffices to show that 
\[ \P(O_S = X_1) \P(O_S = X_2) \le \P(O_S = X_1 \cup X_2) \P(O_S = X_1 \cap X_2).
\]
This is easy to check. Indeed, each $v \in T$ belongs to $O_S$ with some probability $p_v$, independently of the other vertices of $T$. Hence, we have 
\begin{align*} 
\P(O_S = X_1) \P(O_S = X_2) &= \prod_{v \in X_1 \cap X_2}p_v^2 \prod_{v \in X_1 \triangle X_2}p_v(1-p_v) \prod_{v \notin X_1 \cup X_2}(1-p_v)^2\\
&=\P(O_S = X_1 \cup X_2) \P(O_S = X_1 \cap X_2).
\end{align*}
The conditions of Theorem~\ref{4-func} have been verified; Theorem~\ref{corr-set} now follows by induction.
\end{proof}

\section{Conclusion}\label{conc}
The correlation inequality proved in this paper is `intuitively obvious', and it therefore feels somewhat unsatisfactory that our proof must rely on the four-functions theorem. Finding a more elementary proof of our main result remains an interesting problem.

\bibliographystyle{amsplain}
\bibliography{connections}

\providecommand{\bysame}{\leavevmode\hbox to3em{\hrulefill}\thinspace}
\providecommand{\MR}{\relax\ifhmode\unskip\space\fi MR }
\providecommand{\MRhref}[2]{%
  \href{http://www.ams.org/mathscinet-getitem?mr=#1}{#2}
}
\providecommand{\href}[2]{#2}
\begin{thebibliography}{1}

\bibitem{fourfunc}
R.~Ahlswede and D.~E. Daykin, \emph{An inequality for the weights of two
  families of sets, their unions and intersections}, Z. Wahrsch. Verw. Gebiete
  \textbf{43} (1978), 183--185.

\bibitem{GNP}
S.~E. Alm, S.~Janson, and S.~Linusson, \emph{Correlations for paths in random
  orientations of {$G(n,p)$} and {$G(n,m)$}}, Random Structures Algorithms
  \textbf{39} (2011), 486--506.

\bibitem{Tournament}
S.~E. Alm and S.~Linusson, \emph{A counter-intuitive correlation in a random
  tournament}, Combin. Probab. Comput. \textbf{20} (2011), 1--9.

\bibitem{textbook}
N.~Alon and J.~H. Spencer, \emph{The probabilistic method},
  3\textsuperscript{rd} ed., Wiley-Interscience Series in Discrete Mathematics
  and Optimization, John Wiley \& Sons, Inc., Hoboken, NJ, 2008.

\bibitem{Grim}
G.~R. Grimmett, \emph{Infinite paths in randomly oriented lattices}, Random
  Structures Algorithms \textbf{18} (2001), 257--266.

\bibitem{Harris}
T.~E. Harris, \emph{A lower bound for the critical probability in a certain
  percolation process}, Proc. Cambridge Philos. Soc. \textbf{56} (1960),
  13--20.

\bibitem{FourVert}
M.~Leander and S.~Linusson, \emph{Correlation of paths between distinct
  vertices in a randomly oriented graph}, Math. Scand. \textbf{116} (2015),
  287--300.

\bibitem{Linusson}
S.~Linusson, \emph{A note on correlations in randomly oriented graphs},
  Preprint, arXiv:0905.2881.

\bibitem{McD}
C.~McDiarmid, \emph{General percolation and random graphs}, Adv. in Appl.
  Probab. \textbf{13} (1981), 40--60.

\end{thebibliography}

\end{document}